\numberwithin{equation}{section}
\mathchardef\hugecheck="7014
\newcommand\hugesize{\@setfontsize\hugesize{25pt}{0}}
\newcommand\smallhugesize{\@setfontsize\smallhugesize{20pt}{0}}
\def\specialchecksmall{\mbox{\hbox
to0pt{\raisebox{-4pt}{\smallhugesize$\hugecheck$}}
                                 $\kern-2.7pt\otimes$}}
\newtheorem{theorem}{Theorem}[section]
\newtheorem{corollary}[theorem]{Corollary}
\newtheorem{proposition}[theorem]{Proposition}
\newcommand{\field}[1]{\mathbb{#1}}
\newcommand{\C}{{\field{C}}}
\newcommand{\ra}{\rightarrow}
\newcommand{\id}{{\iota}}
\newcommand{\ot}{{\,\otimes\,}}
\newcommand{\om}{{\omega}}
\newcommand{\tp}{{\,\widehat{\otimes}\,}}
\newcommand{\vtp}{\,{\overline{\otimes}\,}}
\newcommand{\ftp}{{\,\overline {\otimes}_{\mathcal{F}}\,}}
\newcommand{\h}{{\mathcal H}}
\newcommand{\B}{{\mathcal{B}}}
\newcommand{\LL}{{L_{\infty}(\G)}}
\newcommand{\LO}{{L_{1}(\G)}}
\newcommand{\LT}{{L_{2}(\G)}}
\newcommand{\CU}{{C_u(\G)}}
\newcommand{\loneqg}{L_1(\mathbb{G})}
\newcommand{\linfqg}{L_\infty(\mathbb{G})}
\newcommand{\PU}{\mathcal {P}}
\newcommand{\G}{\mathbb G}
\def\proclaim #1. #2\par{\medbreak
\noindent{\bf#1.\enspace}{\sl#2}\par\medbreak}
\begin{document}

\title [Realization of quantum group Poisson boundaries as crossed products]
{Realization of quantum group Poisson boundaries \\ as crossed products}
\author{Mehrdad Kalantar, Matthias Neufang and Zhong-Jin Ruan}
\address{School of Mathematics and Statistics,
         Carleton University, Ottawa, Ontario, Canada K1S 5B6}
\email{mkalanta@math.carleton.ca}
\address{School of Mathematics and Statistics,
         Carleton University, Ottawa, Ontario, Canada K1S 5B6}
\email{mneufang@math.carleton.ca}
\address{Universit\'{e} Lille 1 - Sciences et Technologies,
U.F.R. de Math\'{e}matiques, Laboratoire de Math\'{e}matiques Paul Painlev\'{e} - UMR CNRS 8524,
59655 Villeneuve d'Ascq C\'{e}dex, France}
\email{matthias.neufang@math.univ-lille1.fr}

\address{Department of Mathematics,
         University of Illinois, Urbana, IL 61801, USA}
\email{ruan@math.uiuc.edu}
\thanks{The second and third author were partially supported by NSERC 
and the Simons Foundation, respectively.} 

\subjclass[2000]{Primary 46L55, 46L89; Secondary 46L07, 60J50.}

\begin{abstract}
For a locally compact quantum group $\G$, consider the convolution action of a
quantum probability measure $\mu$ on $\LL$. As shown by Junge--Neufang--Ruan, this action 
has a natural extension to a Markov map on $\B(\LT)$. 
We prove that the Poisson boundary of the latter 
can be realized concretely as the von Neumann crossed product of the Poisson boundary associated with 
$\mu$ under the action of $\G$ induced by the coproduct. 
This yields an affirmative answer, for general locally compact quantum groups,
to a problem raised by Izumi (2004) in the commutative situation,
in which he settled the discrete case, and unifies earlier results of Jaworski, Neufang and Runde.
\end{abstract}

\maketitle

\section{Introduction and preliminaries}

Poisson boundaries and harmonic functions
have played a very important role in the study of random
walks on discrete groups, and more generally in harmonic analysis and
ergodic theory on locally compact groups
(see for instance Furstenberg's seminal work \cite{Furst}). 
The noncommutative version of this concept can be considered
in two different directions:
one replaces $L_\infty(G)$ by certain quantum groups, such as
its dual (quantum) group von Neumann algebra $VN(G)$ or
quantum groups arising from mathematical physics;
in another direction, one can perform the quantum mechanical passage from $L_\infty(G)$ to $\B(L_2(G))$. 

In the first direction, Chu--Lau \cite{Chu-Lau} studied the case of $VN(G)$, and 
Poisson boundaries over (discrete) quantum groups 
were first investigated by Izumi \cite{I}, in particular for the
dual of Woronowicz's compact quantum group $SU_q(2)$.
Izumi's results were further generalized to other discrete quantum groups by Izumi--Neshveyev--Tuset, Tomatsu, 
Vaes, Vander Vennet and Vergnioux (\cite{INT}, \cite{Tom}, 
\cite{VV2}, \cite{VV3}). 

The second way of quantization has been studied by Neufang--Ruan--Spronk in 
\cite{NRS}, based on a natural isometric representation $\Theta$ of the measure algebra $M(G)$
of a locally compact group $G$, 
as completely bounded maps on $\B(L_2(G))$, extending the convolution action of measures on $L_\infty(G)$ 
(cf. \cite{Gha}, \cite{Sto}). The structure of the Poisson boundary at the level of $\B(L_2(G))$, 
associated with a probability measure, has been analyzed by Izumi in \cite{II, III} for discrete groups, 
and by Jaworski--Neufang for locally compact groups in \cite{JN}, answering a question raised in \cite{II}. 
Dually, \cite{NRS} investigates the extension of the pointwise action of 
the completely bounded Fourier algebra multipliers $M_{cb} A(G)$, from $VN(G)$ to the level of $\B(L_2(G))$; the 
structure of the corresponding Poisson boundary associated with positive definite functions has been described in 
\cite{NR}, providing a noncommutative analogue of the situation considered in \cite{Chu-Lau}. 

In this paper, we combine these two quantization methods: for a locally compact quantum group $\G$,
we consider the natural extension of the action of a quantum probability
measure $\mu$ on $L_\infty(\G)$ to $\B(L_2(\G))$, as introduced and
studied by Junge--Neufang--Ruan in \cite{JNR}. We completely describe the structure of the Poisson
boundary $\h_{\Theta(\mu)}$ of the induced Markov operator $\Theta(\mu)$.
The main result (Theorem \ref{main}) of the paper
gives a concrete realization of $\h_{\Theta(\mu)}$
as the (von Neumann) crossed product of ${\mathcal H}_\mu$ with 
the quantum group $\G$ under a natural action.

To obtain an action of the quantum group $\G$ on the Poisson boundary,
it is natural to consider the restriction of the co-multiplication to
the latter.
In our setting, a difficulty lies in the fact that in general $\LL$ need not have the slice map property ($S_\sigma$);
for instance, given a discrete group $G$, $VN(G)$ has $S_\sigma$ if and only if $G$ has the approximation property (AP), cf. \cite{HK}.
Note that this problem does not arise in the case of commutative or discrete quantum groups $\G$ since $\LL$ then has $S_\sigma$.
We solve this problem by using the Fubini product, a tool from operator space theory.

In \cite[Theorem 4.1]{II} Izumi proved a crossed product formula for
Poisson boundaries associated with $\ell_\infty(G)$, where $G$ is a countable discrete
group, and as a direct consequence he concluded that the natural action of $G$ on
its Poisson boundary is amenable. Hence, he provided a new proof for this well-known result
of Zimmer \cite{Zim}, and therefore raised the question if such an identification
result extends to all second countable locally compact groups \cite[Problem 4.3]{II}.
This problem was answered affirmatively in \cite{JN}.
The present paper answers Izumi's
question even in the setting of locally compact quantum groups $\G$. 
As a corollary, we conclude that the crossed product von Neumann algebra of
the natural action of $\G$ on its (noncommutative) Poisson boundary is injective.

\par
We recall some definitions and preliminary results that will be used in the sequel.
For more details on locally compact quantum groups we refer the reader to \cite{KV1}, \cite{KV2}.

A {\it locally compact quantum group} $\G$ is a
quadruple $(\LL, \Gamma, \varphi, \psi)$, where $\LL$ is a
von Neumann algebra,
$\Gamma: \LL\to \LL \bar\otimes \LL$
is a co-associative co-multiplication,
and $\varphi$ and  $\psi$ are (normal faithful semi-finite) left, respectively, right
invariant weights on $\LL$,
called the \emph{Haar weights}. The weight $\psi$ determines a Hilbert space $L_{2}(\G)= L_{2}(\G, \psi)$, and
we obtain the \emph{right fundamental unitary operator} $V$
on $L_{2}(\G) \otimes L_{2}(\G)$, which satisfies the
\emph{pentagonal relation} 
$V_{12} V_{13}V_{23} = V_{23} V_{12}$.
Here we used the leg notation
$V_{12}=V\ot 1$, $V_{23} = 1\ot V$, and $V_{13} = (\id\ot\chi)V_{12}$,
where $\chi(x\ot y) = y\ot x$ is the flip map on $\B(H\otimes K)$.
We will also use the flip map $\sigma$ defined on $H\otimes K$.
The fundamental unitary operator induces a coassociative comultiplication
\begin{equation}
\label {F.general}
\tilde \Gamma: x \in \B(L_{2}(\G)) \to \tilde \Gamma(x) = V(x\otimes 1)V^{*}
\in \B(L_{2}(\G)\otimes L_{2}(\G))
\end{equation}
on $\B(L_{2}(\G))$, for which   we have 
$\tilde \Gamma _{|L_{\infty}(\G)}= \Gamma$.

Let $\loneqg$ be the predual of $\linfqg$. Then the pre-adjoint of
$\Gamma$ induces  an associative completely
contractive multiplication
\begin{equation}
\label {F.mul} \star  :  f \otimes g \in 
L_{1}(\G)\tp L_{1}(\G) \to f \star g = (f \otimes
g) \Gamma \in L_{1}(\G)
\end{equation}
on $\loneqg$.
The convolution actions $x\star\mu:=(\id\otimes\mu)\Gamma(x)$ and $\mu\star x:=(\mu\otimes\id)\Gamma(x)$
are normal completely bounded maps on $\LL$.

We denote by $C_{0}(\G)$ the \emph{reduced quantum group $C^*$-algebra} , which
is a weak$^*$ dense $C^*$-subalgebra of $\LL$.
Let $M(\G)$ denote the operator dual $C_{0}(\G)^{*}$.
There exists  a completely contractive multiplication on $M(\G)$ given by
the convolution
\[
\star : \mu\ot \nu \in M(\G)\tp M(\G)\mapsto \mu \star \nu 
= \mu (\id\otimes \nu)\Gamma = \nu (\mu \otimes \id)\Gamma
\in M(\G)
\]
such that  $M(\G)$ contains $\loneqg$ as a norm closed two-sided ideal.

We denote by $\CU$ the \emph{universal quantum group $C^*$-algebra} of $\G$ (see \cite {Kus} for details).
There is a comultiplication
\[
\Gamma_u :\CU\ra M(\CU\otimes_{min}\CU), 
\]
and the operator dual $M_u(\G)=C_{u}(\G)^{*}$, which can be regarded
as the space of all {\it quantum measures} on $\G$, is a
unital completely contractive Banach algebra with multiplication given by
\[
\om \star_{u} \mu = \om (\id\otimes \mu)\Gamma_{u} = \mu(\om\otimes \id)\Gamma_{u}\,.
\]
Moreover, the convolution algebras $M(\G)$ and $\LO$
can be canonically identified with norm closed two-sided ideals in $M_u(\G)$.
Therefore, for each $\mu\in M_u(\G)$, we obtain
a pair of completely bounded maps
\begin{equation}
\label {F.multi}
\mathfrak{m}^{l}_{\mu}(f) = \mu \star f ~~~~~~ \ \ \mbox{and } ~~~~~~ \ \
\mathfrak{m}^{r}_{\mu}(f) = f \star \mu
\end{equation}
on $\LO$ with $\mbox{max}\{\|\mathfrak{m}^{l}_{\mu}\|_{cb},
\|\mathfrak{m}^{r}_{\mu}\|_{cb}\}\le \|\mu\|$.
The adjoint map $\Phi_{\mu}= (\mathfrak{m}^{r}_{\mu})^{*}$ 
is a normal completely bounded map on $\LL$ 
satisfying the covariance condition
\begin{equation}
\label {F.Phi}
\Gamma \circ \Phi_{\mu} = (\id \otimes \Phi_{\mu})\circ \Gamma, 
\end{equation}
or equivalently,
$\Phi_{\mu}(f \star x) = f\star\Phi_{\mu}(x)$
for all $x \in \LL$ and $f\in \LO$.
We are particularly interested in the case when $\mu$ is a state in $M_{u}(\G)$.
In this case, $\Phi_{\mu}$ is unital completely positive,
i.e., a Markov operator, on $\LL$.

\section{Concrete realization of the Poisson boundary in $\B(\LT)$}

In the following, $\G$ denotes a general locally compact quantum group.
We write $\PU_u(\G)$ for the set of all states on $\CU$ (i.e., the `quantum probability measures').
We consider the space of fixed points $\h^{\mu} = \{x\in L_{\infty}(\G): \Phi_{\mu}(x)=x\}$.
It is easy to see that $\h^{\mu}$ is a weak* closed operator system in $L_{\infty}(\G)$.
In fact, we obtain a natural von Neumann algebra product on this space.
Let us recall this construction for the convenience of the reader (cf. \cite [Section 2.5]{I}).

We first define a projection
${\mathcal E}_{{\mu}} : L_{\infty}(\G) \to L_{\infty}(\G)$ of norm one by 
\begin{equation}\label{11}
{\mathcal E}_{\mu} (x) = w^*-\lim_{\mathcal{U}} \frac{1}{n}\sum_{k=1}^{n} \Phi_{\mu}^k (x)
\end{equation}
with respect to a free ultrafilter $\mathcal{U}$ on $\mathbb{N}$.
It is easy to see that $\h^{\mu} = {\mathcal E}_{\mu}(L_{\infty}(\G))$, and that
the Choi-Effros product
\begin{equation}
\label {2l}
x\circ y  = {\mathcal E}_{\mu} (xy)
\end{equation}
defines a von Neumann algebra product on $\h^{{\mu}}$.
We note that this product is independent of the choice of the free ultrafilter
$\mathcal{U}$ since every completely positive isometric linear isomorphism between
two von Neumann  algebras is a $^*$-isomorphism.
To avoid confusion, we denote by $\h_{\mu} = (\h^{\mu}, \circ)$
this von Neumann algebra, and we call
$\h_{\mu}$ the \emph{Poisson boundary} of $\mu$.

It follows from \cite[Theorem 4.5]{JNR} that the Markov operator 
$\Phi_{\mu}$ has a  unique weak* continuous (unital completely positive) extension
$\Theta(\mu)$ to $\B(L_{2}(\G))$ such that 
\begin{equation}
\label {F.rep}
\tilde \Gamma \circ \Theta(\mu) = (\id\otimes  \Phi_{\mu})\circ \tilde \Gamma.
\end{equation}
Similarly to (\ref{11}) we obtain a projection ${\mathcal E}_{\Theta(\mu)}$ of norm one
on $\B(L_{2}(\G))$ and a von Neumann algebra product on  $\h^{\Theta(\mu)}$.
We denote this von Neumann algebra by $\h_{\Theta(\mu)}$.
Our main result (Theorem \ref{main}) shows that there is a left $\G$ action
on $\h_{\mu}$ such that $\h_{\Theta(\mu)}$ is *-isomorphic to the von Neumann
algebra crossed product of $\h_{\mu}$ by $\G$.

Unlike in the classical and the discrete settings, in the case of general
locally compact quantum groups, the fact that the coproduct induces
an action of $\G$ on a Poisson boundary is not trivial. So, first we need to prove this result.
For this purpose, we need to recall the Fubini product for weak* closed operator spaces
on Hilbert spaces. Suppose that  $V$ and $W$ are weak$^*$ closed subspaces of $\B(H)$ and $\B(K)$,
respectively. We define the \emph{Fubini product} of $V$ and $W$ to be the space
\[
V\ftp W= \{X\in\B(H)\vtp\B(K) : (\omega\ot\id)(X)\in W \ \text{and} \ (\id\ot\phi)(X)\in V \ 
\text{for all} \ \omega\in \B(H)_*, \phi\in \B(K)_*\} .
\]
In this case, $V_{*} = \B(H)_{*}/V_{\perp}$ and $W_{*} = \B(K)_{*}/W_{\perp}$
are operator preduals of $V$ and $W$, respectively.
It is known from \cite[Proposition 3.3]{Ruan1992} 
(see also \cite {EKR} and  \cite [$\S 7.2$]{ERbook})
that the Fubini product $V\ftp W$ is a weak$^*$
closed subspace of $\B(H\otimes K)$ such that we have the
weak$^*$ homeomorphic completely isometric isomorphism
\begin{equation}
\label {F.fubini}
V\ftp W = (V_{*}\hat \otimes W_{*})^{*},
\end{equation}
where $V_{*}\hat \otimes W_{*}$ is the operator space projective tensor product of 
$V_{*}$ and $W_{*}$.
In particular, if  $M$ and $N$ are von Neumann algebras,  
the Fubini product coincides with the 
von Neumann algebra tensor product, i.e., we have 
\[
M\ftp N = M\vtp N.
\]

It is also known from operator space theory that there is a canonical   completely 
isometrically identification
\begin{equation}
\label {F.fubini2}
(V_{*}\hat \otimes W_{*})^{*}= {\mathcal {CB}}(V_{*}, W)
\end{equation}
given by the left slice maps. 
Now, if $W_{1}$ and $W_{2}$ are dual operator spaces and 
$\Psi: W_{1}\to W_{2}$ is a (not necessarily weak* continuous) 
completely bounded map, we can apply (\ref {F.fubini}) and (\ref {F.fubini2})
to obtain a completely bounded map
\[
\id\otimes \Psi : V \ftp W_{1}\to V\ftp W_{2}
\]
such that 
\[
(\omega \otimes \id)(\id\otimes \Psi) (X) = \Psi ((\om \otimes \id) (X))
\]
for all  $X\in V\ftp W_1$ and $\omega \in V_{*}$.
It is easy to see that we have $\|\id\otimes \Psi\| _{cb}= \|\Psi\|_{cb}$,
and if $\Psi$ is a completely isometric isomorphism 
(respectively, completely contractive projection)  then so is $\id \ot \Psi$.

Now since $\h^{\mu}$ and $\h_{\mu}= (\h^{\mu}, \circ)$ have the same predual,
 the identity map $\id_{\mu}$ is a 
weak$^*$  homeomorphic and completely isometric  isomorphism  from 
the weak$^*$  closed operator system $\h^{\mu}$ onto the von Neumann algebra
$\h_{\mu}$.  So we obtain the weak*  homeomorphic  and completely isometric isomorphism  
\[
\id \otimes \id_{\mu}: \LL\ftp \h ^{\mu}   \to  \LL\ftp \h_{\mu}.
\]
Since both   $L_{\infty}(\G)$ and $\h_{\mu}$ are von Neumann algebras,
we can identify $ \LL\ftp \h_{\mu}$ with  the von Neumann algebra
$ \LL\vtp \h_\mu$.
We note  that since ${\mathcal E}_{\mu}$ is a  (not necessarily normal) 
 projection of norm one  from  $L_{\infty}(\G) $ onto 
 $\h^{\mu}\subseteq L_{\infty}(\G) $, the map
$\id \otimes {\mathcal E}_{\mu}$ defines a projection of norm one from
 $L_{\infty}(\G)  \vtp L_{\infty}(\G) = L_{\infty}(\G)  \ftp L_{\infty}(\G)$ onto 
$\LL\ftp \h^{\mu} \subseteq L_{\infty}(\G)  \vtp L_{\infty}(\G)$ and thus 
induces a Choi-Effros product such that  $\LL\ftp \h^{\mu}$ becomes a von Neumann algebra.
It turns out that (up to the above identification) this von Neumann algebra is 
exactly equal to $\LL \vtp \h_{\mu}$.

\begin{proposition} \label{lemma1}
For any $\mu\in \PU_u(\G)$ the restriction of  $\Gamma$
  to $\h_{\mu}$ induces a left action $\Gamma_\mu$ 
of $\G$ on the von Neumann algebra ${\mathcal H}_\mu$.
\end{proposition}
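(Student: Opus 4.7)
The plan is to interpret the statement through the Fubini-product machinery just reviewed. Since $\h^\mu$ is only a weak$^*$-closed operator system, and $\LL$ need not have the slice map property, the target $\LL \vtp \h_\mu$ only becomes accessible via the identification $\LL \ftp \h^\mu \cong \LL \vtp \h_\mu$ provided by $\id \otimes \id_\mu$. Accordingly, the first task is to show that for every $x \in \h^\mu$, the element $\Gamma(x) \in \LL \vtp \LL$ actually lies in the Fubini product $\LL \ftp \h^\mu$.

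For this, I apply the covariance relation \eqref{F.Phi}: for $x \in \h^\mu$ one has
\[
(\id \otimes \Phi_\mu)(\Gamma(x)) = \Gamma(\Phi_\mu(x)) = \Gamma(x).
\]
Slicing on the left by an arbitrary $\omega \in \LL_*$, the element $(\omega \otimes \id)(\Gamma(x)) \in \LL$ is fixed by $\Phi_\mu$ and therefore belongs to $\h^\mu$. Hence $\Gamma(x) \in \LL \ftp \h^\mu$, so I may define $\Gamma_\mu$ to be the restriction of $\Gamma$ to $\h^\mu$, viewed as a map $\h_\mu \to \LL \vtp \h_\mu$ through the above identification. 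It is automatically unital, normal, and completely positive.

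The remaining task is to verify that $\Gamma_\mu$ is a $*$-homomorphism for the Choi-Effros product, after which coassociativity $(\id \otimes \Gamma_\mu) \circ \Gamma_\mu = (\Gamma \otimes \id) \circ \Gamma_\mu$ follows immediately from coassociativity of $\Gamma$ on $\LL$, because $\Gamma_\mu$ is just a restriction. The key technical ingredient for multiplicativity is the intertwining
\[
\Gamma \circ \mathcal{E}_\mu = (\id \otimes \mathcal{E}_\mu) \circ \Gamma,
\]
which I would establish by slicing on the left and invoking the pointwise covariance $\Phi_\mu(\omega \star x) = \omega \star \Phi_\mu(x)$ iteratively; the weak$^*$-continuity of left convolution lets the Cesàro limit defining $\mathcal{E}_\mu$ pass through, yielding $\mathcal{E}_\mu(\omega \star x) = \omega \star \mathcal{E}_\mu(x)$. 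Given this, for $x, y \in \h^\mu$,
\[
\Gamma_\mu(x \circ y) = \Gamma(\mathcal{E}_\mu(xy)) = (\id \otimes \mathcal{E}_\mu)(\Gamma(x)\Gamma(y)) = \Gamma_\mu(x) \cdot \Gamma_\mu(y),
\]
where the final product is the Choi-Effros product on $\LL \ftp \h^\mu$ induced by $\id \otimes \mathcal{E}_\mu$, which by the discussion preceding the proposition coincides with the von Neumann algebra product on $\LL \vtp \h_\mu$.

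The main obstacle is the very first step, namely showing that $\Gamma(\h^\mu) \subseteq \LL \ftp \h^\mu$ without the slice map property on $\LL$; the Fubini-product formalism is exactly what makes this tractable, and the covariance \eqref{F.Phi} then does the work.
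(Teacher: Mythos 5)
Your proposal is correct and follows essentially the same route as the paper: membership $\Gamma(\h^\mu)\subseteq \LL\ftp\h^\mu$ via the covariance relation \eqref{F.Phi} and slice maps, multiplicativity of $\Gamma_\mu$ via intertwining $\Gamma$ with the Ces\`aro projection $\mathcal{E}_\mu$, and coassociativity inherited from that of $\Gamma$. The only cosmetic difference is that you isolate $\Gamma\circ\mathcal{E}_\mu=(\id\otimes\mathcal{E}_\mu)\circ\Gamma$ as a separate step verified by slicing, whereas the paper carries out the same Ces\`aro-limit computation inline using normality of $\Gamma$.
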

\begin{proof} 
Let us first show that  $\Gamma (\h^{\mu})\subseteq \LL\ftp \h^{\mu}$.
Given $x\in \h^{\mu}$ and $f \in \LO$,  we have 
\begin{eqnarray*}
\Phi_{\mu}\big((f\otimes \id) \Gamma(x)\big) = 
(f\otimes \id) (\id \otimes \Phi_{\mu})\Gamma(x) 
= (f\otimes \id)\Gamma(\Phi_{\mu}(x)) = 
(f\otimes \id) \Gamma(x).
\end{eqnarray*}
This shows that $(f\otimes \id) \Gamma(x)$ is  contained in $\h^{\mu}$ for all $f \in \LO$.
On the other hand,  we clearly  have $(\id\ot g)\Gamma(x)\in L_{\infty}(\G)$ for all $g\in(\h_{\mu})_*$.
Hence, $\Gamma(x)\in \LL\ftp \h^{\mu}$.

It is clear that $\Gamma\mid_{\h^\mu} : \h^{\mu}\to  \LL\ftp \h^{\mu}$
is a normal   injective  unital completely positive isometry.
This induces the map $\Gamma_\mu : \h_\mu\to\LL\vtp\h_\mu$ given by
$\Gamma_\mu = (\id\ot\id_\mu)\circ\Gamma\circ{\id_\mu}^{-1}$.
It suffices to show that $\Gamma_{\mu}$ is an algebra homomorphism with respect to the
corresponding Choi-Effros products on $\h_{\mu}$ and $\LL\ftp \h_{\mu}$.
Given $x, y \in \h_{\mu}$, we now use  (\ref{11}) and (\ref {2l}) to obtain that
\begin{eqnarray*}
\Gamma_{\mu}(x \circ y) &=& \Gamma ({\mathcal E} _{{\mu}}(xy)) = 
\Gamma(w^*-\lim_{\mathcal U}\frac{1}{n}\sum_{k=1}^{n} \Phi_{\mu}^k(xy))
= w^*-\lim_{\mathcal U}\frac{1}{n}\sum_{k=1}^{n}\Gamma(\Phi_{\mu}^{k}(xy))\\
&=& w^*-\lim_{\mathcal U}\frac{1}{n}\sum_{k=1}^{n} 
( \id \otimes \Phi_{\mu}^{k})(\Gamma(xy))
= ( \id \otimes {\mathcal E} _{{\mu} })(\Gamma(x) \Gamma(y))\\
&=&
 \Gamma_{\mu}(x)\circ\Gamma_{\mu}(y) \in \LL\vtp \h_{\mu}.
\end{eqnarray*}
Since $\Gamma$ is a comultiplication on $L_{\infty}(\G)$, it is clear 
that $\Gamma_{\mu}$ satisfies 
\[
(\id \otimes \Gamma_{\mu})\circ \Gamma_{\mu} = (\Gamma \otimes \id)\circ \Gamma_{\mu}.
\]
So $\Gamma_\mu$ defines a left action.
\end{proof}

As we discussed above,  we have a projection 
${\mathcal E}_{\Theta(\mu)}$ of norm  one  from  
$\B(L_{2}(\G))$ onto $\h^{\Theta(\mu)}$ and  obtain  the  Choi-Effros
von Neumann algebra product on $\h_{\Theta(\mu)}$ given by
\[
X \circ Y = {\mathcal E}_{\Theta(\mu)}(XY)
\]
for $X, Y \in \h_{\Theta(\mu)}= (\h^{\Theta(\mu)}, \circ)$.
It is easy to see that the restriction of ${\mathcal E}_{\Theta(\mu)}$ to $L_{\infty}(\G)$ 
is equal to ${\mathcal E}_{\mu}$ 
and  $\h_{\mu}$ is a von Neumann subalgebra of $\h_{\Theta(\mu)}$.
Recall that  $\tilde \Gamma$ denotes the comultiplication on $\B(L_{2}(\G))$ 
defined in (\ref {F.general}).

\begin{proposition} 
For any $\mu\in \PU_u(\G)$
the restriction of $\tilde\Gamma$ to ${\mathcal H}_{\Theta(\mu)}$ induces a normal 
injective unital $*$-homomorphism 
\[
\tilde \Gamma_{{\Theta(\mu)}} :{\mathcal H}_{\Theta(\mu)}\rightarrow 
\B(L_2(\G)) \vtp {\mathcal H}_\mu
\]
between von Neumann algebras.
Moreover, the restriction of $\tilde \Gamma_{\Theta(\mu)}$ to $\h_{\mu}$ is equal to $\Gamma_{\mu}$.
\end{proposition}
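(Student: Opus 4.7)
The plan is to mirror the strategy of Proposition \ref{lemma1}, replacing $\Phi_\mu$ with $\Theta(\mu)$ and $\Gamma$ with $\tilde\Gamma$, and exploiting the covariance relation (\ref{F.rep}), which crucially couples the two maps via $\Phi_\mu$ acting on the \emph{second} leg only. Note also that (\ref{F.rep}) iterates to give $\tilde\Gamma\circ\Theta(\mu)^k=(\id\otimes\Phi_\mu^k)\circ\tilde\Gamma$ for every $k$.

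First, I would show that $\tilde\Gamma(\h^{\Theta(\mu)})\subseteq \B(\LT)\ftp\h^\mu$. For $X\in\h^{\Theta(\mu)}$ and $\omega\in\B(\LT)_*$, the covariance (\ref{F.rep}) gives
\[
\Phi_\mu\bigl((\omega\otimes\id)\tilde\Gamma(X)\bigr)=(\omega\otimes\id)(\id\otimes\Phi_\mu)\tilde\Gamma(X)=(\omega\otimes\id)\tilde\Gamma(\Theta(\mu)(X))=(\omega\otimes\id)\tilde\Gamma(X),
\]
so the first-leg slice lies in $\h^\mu$; the other Fubini condition is automatic. Composing with the Fubini-functorial map $\id\otimes\id_\mu$ and with $\id_{\Theta(\mu)}^{-1}$ then produces a normal unital completely positive map $\tilde\Gamma_{\Theta(\mu)}:\h_{\Theta(\mu)}\to\B(\LT)\vtp\h_\mu$, where normality is inherited from the weak-$*$ continuity of $\tilde\Gamma$, and injectivity from the injectivity of $x\mapsto V(x\otimes 1)V^*$.

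The substantive point is multiplicativity with respect to the two Choi--Effros products. For $X,Y\in\h_{\Theta(\mu)}$, I would compute, using the normality of $\tilde\Gamma$, the averaging formula (\ref{11}), the iterated covariance, and the fact that $\tilde\Gamma$ is a $*$-homomorphism on $\B(\LT)$:
\begin{eqnarray*}
\tilde\Gamma\bigl({\mathcal E}_{\Theta(\mu)}(XY)\bigr)&=&w^*\text{-}\lim_{\mathcal U}\frac{1}{n}\sum_{k=1}^n\tilde\Gamma(\Theta(\mu)^k(XY))\\
&=&w^*\text{-}\lim_{\mathcal U}\frac{1}{n}\sum_{k=1}^n(\id\otimes\Phi_\mu^k)\bigl(\tilde\Gamma(X)\tilde\Gamma(Y)\bigr)\\
&=&(\id\otimes{\mathcal E}_\mu)\bigl(\tilde\Gamma(X)\tilde\Gamma(Y)\bigr).
\end{eqnarray*}
Modulo $\id\otimes\id_\mu$, the right-hand side is precisely $\tilde\Gamma_{\Theta(\mu)}(X)\circ\tilde\Gamma_{\Theta(\mu)}(Y)$, since the Choi--Effros product on $\B(\LT)\vtp\h_\mu$ (viewed Fubini-theoretically as $\B(\LT)\ftp\h^\mu$) is implemented by the norm-one projection $\id\otimes{\mathcal E}_\mu$, as noted just before Proposition \ref{lemma1}. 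A unital completely positive multiplicative map between von Neumann algebras is automatically a $*$-homomorphism, which closes the first assertion.

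The final clause $\tilde\Gamma_{\Theta(\mu)}|_{\h_\mu}=\Gamma_\mu$ is then immediate from $\tilde\Gamma|_{\LL}=\Gamma$ together with ${\mathcal E}_{\Theta(\mu)}|_{\LL}={\mathcal E}_\mu$. I expect the main obstacle to be making sense of $\id\otimes{\mathcal E}_\mu$ despite the (in general) non-normality of ${\mathcal E}_\mu$: this is resolved by invoking the Fubini-product functoriality described in (\ref{F.fubini})--(\ref{F.fubini2}), and it is used in tandem with the fact that $\tilde\Gamma$ actually lands in $\B(\LT)\vtp\LL$, so that $\id\otimes\Phi_\mu^k$, and its Cesàro limit $\id\otimes{\mathcal E}_\mu$, can legitimately be applied to $\tilde\Gamma(XY)$.
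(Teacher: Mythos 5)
Your proposal is correct and follows essentially the same route as the paper: the first-leg slice computation via the covariance relation (\ref{F.rep}) is exactly the paper's argument for $\tilde\Gamma(\h_{\Theta(\mu)})\subseteq\B(L_2(\G))\ftp\h_\mu$, and your multiplicativity argument via the Ces\`aro averages and $\id\otimes\mathcal{E}_\mu$ is precisely the ``rest of the proof'' that the paper defers to the proof of Proposition \ref{lemma1}, carried out explicitly and correctly.
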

\begin{proof}
For any $x \in \h_{\Theta(\mu)}$, we have $\tilde \Gamma(x)\in 
 \B(L_2(\G)) \vtp \LL$.
We can apply (\ref {F.rep}) to get 
\[
\Phi_{\mu}\left((\om \otimes \id)\tilde \Gamma(x)\right)
= (\om \otimes \id)(\id \otimes \Phi_{\mu}) \tilde \Gamma(x)
=(\om \otimes \id)(\tilde \Gamma(\Theta(\mu)(x))) 
= (\om \otimes \id)(\tilde \Gamma(x))
\]
for all $\omega \in \B(L_{2}(\G))_{*}$.
This shows that  $\tilde \Gamma(\h_{\Theta(\mu)})\subseteq  \B(L_2(\G)) \ftp \h_{\mu}
=\B(L_2(\G))\vtp \h_{\mu}$.
The rest of proof is similar to that given in the proof of Proposition \ref {lemma1}.
\end{proof}

For $\mu\in \PU_u(\G)$, let $\Gamma_{\mu}$ be the left action of $\G$ on the 
von Neumann algebra  $\h_{\mu}$ given in Proposition \ref {lemma1}.
The crossed product $\G  \ltimes_{\Gamma_{\mu}} \h_{\mu}$ 
 is  defined to be the von Neumann algebra 
 \[
 \G  \ltimes _{\Gamma_{\mu}} \h_{\mu} = [ \Gamma_{\mu}(\h_{\mu}) \cup 
 (L_{\infty}(\hat \G)\otimes 1) ]''
 \]
in $\B(L_2(\G))\vtp \h_{\mu}$.  The following result, which is crucial  for us, 
even holds in the setting of measured quantum groupoids 
 (cf.\ \cite[Theorem 11.6]{Enock}). 

\begin{theorem}
Let $\mu\in \PU_u(\G)$.
Denote by $\chi$ the flip map $\chi (a \otimes b) = b \otimes a$.
Then  
\[
\beta :
x \in  \B(L_2(\G)) \vtp \h_{\mu}\ra 
 (\sigma V^{*} \sigma\otimes 1)\big((\chi\otimes \id)(\id \otimes \Gamma_{\mu})(x)\big)
(\sigma V \sigma\otimes 1) \in  \LL\vtp \B(L_2(\G)) \vtp \h_{\mu}
\]
 defines a left action of $\G$ on the von Neumann algebra 
  $ \B(L_2(\G))\vtp \h_{\mu}$, and we have
\[
\G  \ltimes_{\Gamma_{\mu}}  \h_{\mu} = (\B(L_2(\G))\vtp \h_{\mu})^\beta, 
\]
where $ (\B(L_2(\G))\vtp \h_{\mu})^\beta 
= \{ y\in \B(L_2(\G))\vtp \h_{\mu}: 
\beta (y) = 1 \otimes y \}$ is the fixed point algebra of $\beta$.
\end{theorem}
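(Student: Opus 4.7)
The plan is to verify that $\beta$ is a well-defined left coaction of $\G$ on $\B(L_2(\G)) \vtp \h_\mu$ and then to establish the two inclusions characterizing the crossed product as the fixed-point algebra of $\beta$. This amounts to a locally compact quantum group version of the Takesaki--Digernes duality between a crossed product and the fixed-point algebra of its canonical dual action.

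First I would check that $\beta$ is a normal unital $*$-homomorphism from $\B(L_2(\G)) \vtp \h_\mu$ into $\LL \vtp \B(L_2(\G)) \vtp \h_\mu$ satisfying the left coaction identity $(\Gamma \ot \id)\beta = (\id \ot \beta)\beta$. Normality and the unital $*$-homomorphism property are automatic, since $\beta$ is the composition of the normal $*$-homomorphism $\id \ot \Gamma_\mu$ (obtained by tensoring the map from Proposition \ref{lemma1} with the identity on $\B(L_2(\G))$), the flip $\chi \ot \id$, and conjugation by the unitary $\sigma V^* \sigma \ot 1$. For the coaction identity I would expand both sides and use the pentagonal relation $V_{12}V_{13}V_{23} = V_{23}V_{12}$ together with the coaction identity for $\Gamma_\mu$ already established in Proposition \ref{lemma1}.

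Next I would prove the inclusion $\G \ltimes_{\Gamma_\mu} \h_\mu \subseteq (\B(L_2(\G)) \vtp \h_\mu)^\beta$ by checking that the two generating sets are fixed by $\beta$. For $y \in \h_\mu$, coassociativity gives $(\id \ot \Gamma_\mu)\Gamma_\mu(y) = (\Gamma \ot \id)\Gamma_\mu(y)$; after applying $\chi \ot \id$ and conjugating by $\sigma V^* \sigma \ot 1$, the formula $\Gamma(z) = V(z \ot 1)V^*$ combined with the pentagonal relation collapses this to $1 \ot \Gamma_\mu(y)$. For $a \ot 1 \in \LLL \ot 1$, one has $(\id \ot \Gamma_\mu)(a \ot 1) = a \ot 1 \ot 1$, so $(\chi \ot \id)$ of this element is $1 \ot a \ot 1$, and the required invariance under conjugation reduces to the standard commutation of $\sigma V \sigma$ with the leg carrying $\LLL$.

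The reverse inclusion is the main obstacle. One must show that any $y \in \B(L_2(\G)) \vtp \h_\mu$ fixed by $\beta$ already lies in $[\Gamma_\mu(\h_\mu) \cup (\LLL \ot 1)]''$. The argument is a slice-map and density argument in the spirit of Vaes' treatment of crossed product duality for locally compact quantum groups: applying normal slices $(\omega \ot \id \ot \id)$ with $\omega \in \LL_*$ to the equation $\beta(y) = 1 \ot y$ produces commutation relations that, together with the commutation theorem for the multiplicative unitary $V$, force $y$ into the bicommutant in question; the fact that $\Gamma_\mu$ is a genuine coaction is the crucial input here. As the paper indicates, this conclusion is a special case of Enock's \cite[Theorem 11.6]{Enock}, which handles the more general measured quantum groupoid setting, and may therefore be invoked directly.
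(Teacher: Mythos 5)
Your proposal matches the paper's approach: the paper states this theorem without any proof of its own, deferring entirely to Enock's result (\cite[Theorem 11.6]{Enock}) in the measured quantum groupoid setting, and you likewise invoke that theorem for the only substantive direction, namely that every $\beta$-fixed element of $\B(L_2(\G))\vtp\h_{\mu}$ already lies in $[\Gamma_{\mu}(\h_{\mu})\cup(\LLL\ot 1)]''$. The routine verifications you add --- that $\beta$ is a coaction and that the generators $\Gamma_{\mu}(\h_{\mu})$ and $\LLL\ot 1$ are fixed, using the pentagon relation and the fact that $V\in\LLL'\vtp\LL$ --- are correct and consistent with the computation the paper itself performs later inside the proof of Theorem \ref{main}.
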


Now we can prove the main theorem of this paper. 
Applied to the case $\G = L_\infty(G)$, even without the assumption of second countability, 
our theorem yields \cite[Proposition 6.3]{JN} which provided the answer to Izumi's original question. 
Specializing to the case $\G = VN(G)$, our result implies 
the main theorem of \cite{NR} where it was assumed either
that $G$ has the approximation property, or $\mu$ belongs to the Fourier algebra $A(G)=L_1(\G)$. 

\begin{theorem} \label{main}
Let $\G$ be a locally compact quantum group and let $\mu\in \PU_u(\G)$.
The induced map
\[
\tilde \Gamma_{{\Theta(\mu)}} : {\mathcal H}_{\Theta(\mu)}\rightarrow
\B(L_2(\G)) \vtp {\mathcal H}_\mu
\]
defines a von Neumann algebra isomorphism between
 ${\mathcal{H}}_{\Theta(\mu)}$ and $\G  \ltimes _{\Gamma_{\mu}} \h_{\mu}$.
\end{theorem}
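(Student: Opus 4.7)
The plan is to identify $\tilde\Gamma_{\Theta(\mu)}(\h_{\Theta(\mu)})$ with the fixed-point algebra $(\B(L_2(\G))\vtp\h_\mu)^\beta$, which by the preceding theorem equals $\G\ltimes_{\Gamma_\mu}\h_\mu$. Since $\tilde\Gamma_{\Theta(\mu)}$ is already known to be a normal, unital, injective $*$-homomorphism, the problem reduces to verifying two set-theoretic inclusions between the image and the fixed-point algebra.

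For $\tilde\Gamma_{\Theta(\mu)}(\h_{\Theta(\mu)})\subseteq(\B(L_2(\G))\vtp\h_\mu)^\beta$, fix $x\in\h^{\Theta(\mu)}$ and verify $\beta(\tilde\Gamma(x))=1\otimes\tilde\Gamma(x)$ directly. Since the second leg of $\tilde\Gamma(x)$ lies in $\h^\mu\subseteq\LL$, the slice expression $(\id\otimes\Gamma_\mu)\tilde\Gamma(x)$ coincides with $(\id\otimes\tilde\Gamma)\tilde\Gamma(x)=(\tilde\Gamma\otimes\id)\tilde\Gamma(x)$ by coassociativity of $\tilde\Gamma$. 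Combining this with the identity $\chi\circ\tilde\Gamma(a)=(\sigma V\sigma)(1\otimes a)(\sigma V^*\sigma)$ — immediate from $\tilde\Gamma(a)=V(a\otimes 1)V^*$ together with $\sigma(a\otimes 1)\sigma=1\otimes a$ — yields
\[
(\chi\otimes\id)(\id\otimes\Gamma_\mu)\tilde\Gamma(x)=(\sigma V\sigma\otimes 1)(1\otimes\tilde\Gamma(x))(\sigma V^*\sigma\otimes 1).
\]
The outer conjugation by $\sigma V^*\sigma\otimes 1$ and $\sigma V\sigma\otimes 1$ in the definition of $\beta$ then cancels these unitary factors, leaving $\beta(\tilde\Gamma(x))=1\otimes\tilde\Gamma(x)$.

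For the reverse inclusion, I would apply the preceding theorem to the counit state $\mu=\epsilon$, for which $\h_\epsilon=\LL$ and $\Gamma_\epsilon=\Gamma$; combined with the standard Takesaki--Takai-type duality $\G\ltimes_\Gamma\LL=\tilde\Gamma(\B(L_2(\G)))$ for locally compact quantum groups, this identifies the fixed-point algebra $(\B(L_2(\G))\vtp\LL)^{\tilde\beta}$ — where $\tilde\beta$ is obtained from $\beta$ by replacing $\Gamma_\mu$ with $\Gamma$ — precisely with $\tilde\Gamma(\B(L_2(\G)))$. Given $y\in(\B(L_2(\G))\vtp\h_\mu)^\beta$, view it inside $\B(L_2(\G))\vtp\LL$ via the Fubini embedding $\B(L_2(\G))\ftp\h^\mu\subseteq\B(L_2(\G))\ftp\LL$. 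Because the second-leg slices of $y$ all lie in $\h^\mu$, where $\Gamma_\mu$ and $\Gamma$ agree, one has $\beta(y)=\tilde\beta(y)=1\otimes y$, so $y=\tilde\Gamma(x)$ for a unique $x\in\B(L_2(\G))$. To promote $x$ to an element of $\h^{\Theta(\mu)}$, apply $\id\otimes\Phi_\mu$: each second-leg slice of $y$ is fixed by $\Phi_\mu$, giving $(\id\otimes\Phi_\mu)(y)=y$, and the covariance relation (\ref{F.rep}) then yields $\tilde\Gamma(\Theta(\mu)(x))=(\id\otimes\Phi_\mu)\tilde\Gamma(x)=y=\tilde\Gamma(x)$, so injectivity of $\tilde\Gamma$ forces $\Theta(\mu)(x)=x$.

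The chief delicate point I anticipate is justifying the slice-wise identifications $\beta(y)=\tilde\beta(y)$ and $(\id\otimes\Phi_\mu)(y)=y$ for elements of the Fubini product $\B(L_2(\G))\ftp\h^\mu$, without any approximation-property assumption on $\G$. The Fubini-product framework recalled in the preliminaries — notably the canonical extension $\id\otimes\Psi$ constructed via left slice maps — is exactly what makes these manipulations legitimate in full generality and supplies the uniform treatment needed for all locally compact quantum groups.
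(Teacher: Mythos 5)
Your proof is correct, and its first half --- showing $\tilde\Gamma_{\Theta(\mu)}(\h_{\Theta(\mu)})\subseteq(\B(L_2(\G))\vtp\h_\mu)^\beta$ by replacing $\Gamma_\mu$ with $\tilde\Gamma$ on the second leg, invoking coassociativity, and cancelling the $\sigma V\sigma$ factors --- is essentially the same computation the paper performs. Where you genuinely diverge is the reverse inclusion. The paper takes a shorter route: it verifies that the two generating sets of the crossed product lie in the image, namely $\hat x\otimes 1=\tilde\Gamma(\hat x)=\tilde\Gamma_{\Theta(\mu)}(\hat x)$ for $\hat x\in L_\infty(\hat\G)$ (using $V\in L_\infty(\hat\G)'\vtp\LL$ together with (\ref{F.rep}) to see that $\hat x\in\h^{\Theta(\mu)}$) and $\Gamma_\mu(\h_\mu)=\tilde\Gamma_{\Theta(\mu)}(\h_\mu)$, and then uses that the image of the normal unital injective $*$-homomorphism $\tilde\Gamma_{\Theta(\mu)}$ is a von Neumann algebra to get $\G\ltimes_{\Gamma_\mu}\h_\mu\subseteq\tilde\Gamma_{\Theta(\mu)}(\h_{\Theta(\mu)})$. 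You instead prove the a priori stronger inclusion $(\B(L_2(\G))\vtp\h_\mu)^\beta\subseteq\tilde\Gamma_{\Theta(\mu)}(\h_{\Theta(\mu)})$ by passing through the case $\mu=\epsilon$ and the identification $\G\ltimes_\Gamma\LL=\tilde\Gamma(\B(L_2(\G)))$. Be aware that this identification is precisely the $\mu=\epsilon$ instance of the theorem you are proving; it is indeed a known duality, but it rests on the generation result $[\LL\cup L_\infty(\hat\G)]''=\B(L_2(\G))$ (equivalently, $\LL'\cap L_\infty(\hat\G)'=\C 1$), which is an external input the paper does not need and which you should cite explicitly rather than gesture at as ``Takesaki--Takai-type duality''. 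Granting it, your remaining steps --- $\beta(y)=\tilde\beta(y)$ because the second-leg slices of $y$ lie in $\h^\mu$ where $\Gamma_\mu$ and $\Gamma$ agree, $(\id\otimes\Phi_\mu)(y)=y$ by the same slice argument, and $\Theta(\mu)(x)=x$ from (\ref{F.rep}) and injectivity of $\tilde\Gamma$ --- are legitimate Fubini-product manipulations and do close the argument. The trade-off: the paper's route is shorter and self-contained, while yours gives a direct description of how an arbitrary $\beta$-fixed point arises from a $\Theta(\mu)$-harmonic operator, at the price of importing the commutation theorem for $\LL$ and $L_\infty(\hat\G)$.
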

\begin{proof} 
Since $V \in L_{\infty}(\hat \G)'\vtp L_{\infty}(\G)$,
it is easy to see  from (\ref {F.general}) and (\ref {F.rep}) that we have
\begin{eqnarray*}
\Theta(\mu)(\hat x)\otimes 1 = V^{*}\big (\tilde \Gamma (\Theta(\mu)(\hat x))\big ) V
= V^{*}\big ( \id \otimes \Phi_{\mu})(V(\hat x \otimes 1 )V^{*}) \big ) V = \hat x \otimes 1
\end{eqnarray*}
 for all  $\hat x \in L_{\infty}(\hat \G)$.  This shows  that  
 $\hat x \in  \h_{\Theta(\mu)}$
 and 
 \[
\hat x \otimes 1= \tilde \Gamma(\hat x)= 
\tilde \Gamma_{\Theta(\mu)}(\hat x ) \in  \tilde \Gamma_{\Theta(\mu)}
({{\mathcal{H}}_{\Theta(\mu)}})
\]
for all $\hat x\in L_{\infty}(\hat \G)$.
Moreover, since  ${\mathcal{H}}_\mu\subseteq {\mathcal{H}}_{\Theta(\mu)}$
and the restriction of $\tilde \Gamma_{\Theta(\mu)} $ to ${\h_{\mu}}$ is 
equal to $\Gamma_{\mu}$, we obtain
\[
\Gamma_{\mu}(\h_{\mu}) = \tilde \Gamma_{\Theta(\mu)}(\h_{\mu})\subseteq \tilde \Gamma_{\Theta(\mu)}
({{\mathcal{H}}_{\Theta(\mu)}}).
\]
Since $\tilde \Gamma_{\Theta(\mu)}$ is a unital $*$-homomorphism 
 from $\h_{\Theta(\mu)}$ into 
$\B(L_{2}(\G))\vtp \h_{\mu}$,  this implies that
\[
 \G  \ltimes _{\Gamma_{\mu}} \h_{\mu}  = (\Gamma_{\mu}({\mathcal{H}}_\mu)
 \cup ( L_{\infty}(\hat \G)\otimes 1))''
 = \tilde \Gamma_{\Theta(\mu)}(({\mathcal{H}}_\mu\cup L_{\infty}(\hat \G))'')
 \subseteq \tilde\Gamma_{\Theta(\mu)} ({\mathcal{H}}_{\Theta(\mu)}).
\]

Conversely, let $x \in {\mathcal{H}}_{\Theta(\mu)}$. 
Then  $\tilde \Gamma_{\Theta(\mu)} (x) \in \B(L_2(\G))\vtp \h_{\mu}$
and 
\begin{eqnarray*}
\beta(\tilde \Gamma_{\Theta(\mu)}(x)) &=& 
(\sigma V^{*} \sigma\otimes 1)\big((\chi\otimes \id)(\id \otimes \tilde\Gamma)
(\tilde\Gamma_{\Theta(\mu)} (x))\big)(\sigma V \sigma\otimes 1)\\
&=& (\sigma V^{*}\sigma\otimes 1)
\big((\sigma\otimes \id)(\tilde\Gamma\otimes \id)(\tilde\Gamma_{\Theta(\mu)}(x))
(\sigma\otimes  \id)\big)(\sigma V \sigma\otimes 1)\\
&=&(\sigma V^{*} \otimes 1)\big(( V \otimes 1)
(\tilde\Gamma_{\Theta(\mu)}(x)_{13})(V^{*}\otimes 1)\big)
(V \sigma\otimes 1)\\
&=& (\sigma\otimes 1)(\tilde\Gamma_{\Theta(\mu)} (x)_{13})(\sigma\otimes1)
= 1 \otimes \tilde\Gamma_{\Theta(\mu)} (x).
\end{eqnarray*}
Therefore, we have 
\[
\tilde\Gamma_{\Theta(\mu)} (x)\in (\B(L_2(\G))\vtp \h_{\mu})^\beta 
= \G\ltimes _{\Gamma_{\mu}}  \h_{\mu}  
\]
for all $x \in {\mathcal{H}}_{\Theta(\mu)}$.
This  completes the proof.
\end{proof} 

The following result is an immediate consequence of Theorem \ref {main}.

\begin{corollary}\label{inj}
 The crossed product von Neumann algebra $\G  \ltimes _{\Gamma_{\mu}} \h_{\mu}$ is injective.
\end{corollary}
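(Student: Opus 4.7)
The plan is to invoke Theorem \ref{main} to translate the injectivity question for the crossed product into an injectivity question for $\h_{\Theta(\mu)}$, which can then be read off from the construction of $\h_{\Theta(\mu)}$ as the image of a norm-one projection on an injective von Neumann algebra.

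More precisely, by Theorem \ref{main} we have a (normal) $*$-isomorphism
\[
\G \ltimes_{\Gamma_\mu} \h_\mu \;\cong\; \h_{\Theta(\mu)},
\]
so it suffices to show that $\h_{\Theta(\mu)}$ is injective. Recall that $\h_{\Theta(\mu)} = (\h^{\Theta(\mu)},\circ)$, where $\h^{\Theta(\mu)}$ is the image of the unital completely positive projection $\mathcal{E}_{\Theta(\mu)} : \B(L_2(\G)) \to \B(L_2(\G))$ constructed via a free ultrafilter limit of averages of the Markov map $\Theta(\mu)$.

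Now $\B(L_2(\G))$ is injective, and by the classical Choi--Effros theorem, the range of any unital completely positive projection from an injective operator system onto a weak$^*$ closed unital subspace carries a (unique) von Neumann algebra structure under the Choi--Effros product and, as such, is itself injective: one simply composes an injective extension from $\h^{\Theta(\mu)}$ into $\B(L_2(\G))$ with $\mathcal{E}_{\Theta(\mu)}$ to produce, for any unital completely positive map defined on a subspace, a completely positive extension with values in $\h_{\Theta(\mu)}$. Applying this to $\h_{\Theta(\mu)}$ shows that $\h_{\Theta(\mu)}$ is an injective von Neumann algebra.

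Combining these two observations yields the injectivity of $\G \ltimes_{\Gamma_\mu} \h_\mu$. There is no real obstacle here, since the hard analytic content — identifying the crossed product with the Poisson boundary of the extended Markov map — has already been carried out in Theorem \ref{main}; the only point to be careful about is that the projection $\mathcal{E}_{\Theta(\mu)}$ is not assumed normal, but this is irrelevant for the injectivity argument, which uses only that it is a completely positive contractive idempotent onto its image.
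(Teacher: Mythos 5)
Your argument is correct and is precisely the one the paper intends: the corollary is stated as an immediate consequence of Theorem \ref{main}, the point being exactly that $\h_{\Theta(\mu)}$ is the range of the unital completely positive projection $\mathcal{E}_{\Theta(\mu)}$ on the injective algebra $\B(L_2(\G))$ and is therefore injective as a von Neumann algebra under the Choi--Effros product. Your remark that normality of $\mathcal{E}_{\Theta(\mu)}$ is irrelevant here is also accurate.
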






As pointed out by Izumi in \cite{II} as a concrete example for the crossed product formula he obtained for countable discrete groups,
in the case of random walks on the 
free group $\mathbb{F}_n$ with respect to the uniform distribution on the generators, the
result yields an identification of the Poisson boundary on the level of 
$\B(\ell_2(\mathbb{F}_n))$ ($n \geq 2$) with the Powers factor of type $III_{1/(2n-1)}$. 
Moreover, as noted in \cite{II}, the crossed product realization immediately implies 
amenability of the natural $G$-action on the Poisson boundary in the sense of Zimmer. 
The concept of an amenable action was first introduced by
Zimmer \cite{Zim} in the context of a measure class preserving action of a locally compact
second countable group on a standard Borel space. It was subsequently shown that
if $G$ is a second countable locally compact group and
$\alpha:G\curvearrowright X$ is a measure class preserving
action of $G$ on a standard probability space $(X,\nu)$,
then $\alpha$ is an amenable action if and only if the crossed product
$G  \ltimes _{\alpha} L_\infty(X,\nu)$ is injective.
This was generalized to the case
of actions of locally compact groups on von Neumann algebras
by Anantharaman-Delaroche \cite{dela}.
From this perspective, our result suggests that injectivity of the crossed product may provide
a notion of `Zimmer amenability' of a quantum group action.
We note that a concept of (topologically) amenable actions for discrete quantum groups $\G$ in the $C^*$-algebra
framework was defined by Vaes--Vergnioux in \cite {VV3}.
It is shown in \cite[Proposition 4.4]{VV3} that topological amenability of the action
on a unital nuclear $C^*$-algebra entails nuclearity of the (reduced) $C^*$-crossed product.

Our proof of Theorem \ref{main} gives a simple way to obtain the following.

\begin{proposition}\label{oz}
Let $G$ be a discrete group. Then the following are equivalent:
\begin{itemize}
\item[(i)]
$G$ is exact;
\item[(ii)]
for every probability measure $\mu$ on $G$, the reduced $C^*$-algebra
crossed product
$G  \ltimes _{\Gamma_{\mu}} \h_{\mu}$
is nuclear;
\item[(iii)]
for some probability measure $\mu$ on $G$, the crossed product
$G  \ltimes _{\Gamma_{\mu}} \h_{\mu}$ is nuclear.
\end{itemize}
\end{proposition}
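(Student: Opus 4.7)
The plan is to close the cycle $\mathrm{(ii)} \Rightarrow \mathrm{(iii)} \Rightarrow \mathrm{(i)} \Rightarrow \mathrm{(ii)}$. The first implication is immediate.

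For $\mathrm{(iii)} \Rightarrow \mathrm{(i)}$: By Theorem \ref{main}, the von Neumann crossed product $G \ltimes_{\Gamma_\mu} \h_\mu$ contains $L_\infty(\hat\G) \otimes 1$, so at the reduced $C^*$-level the crossed product contains $C^*_r(G) \otimes 1$ as a unital $C^*$-subalgebra. Nuclearity of the ambient $C^*$-algebra then forces exactness of $C^*_r(G)$ by Kirchberg's subalgebra principle (a $C^*$-subalgebra of a nuclear $C^*$-algebra is exact), which is equivalent to exactness of $G$ via Kirchberg--Wassermann.

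For $\mathrm{(i)} \Rightarrow \mathrm{(ii)}$: The degenerate case $\mu = \delta_e$ yields $\h_\mu = \ell^\infty(G)$ and reduces the statement to the Guentner--Kaminker--Ozawa theorem identifying exactness of $G$ with nuclearity of the uniform Roe algebra $\ell^\infty(G) \rtimes_r G$. For an arbitrary probability measure $\mu$, I would use the $G$-equivariant unital completely positive projection $\mathcal{E}_\mu : \ell^\infty(G) \to \h_\mu$ of (\ref{11}), together with the $G$-equivariant unital completely positive inclusion $\iota : \h_\mu \hookrightarrow \ell^\infty(G)$ (both ucp with respect to the Choi--Effros product on $\h_\mu$, since that product preserves the positive cone inherited by $\h^\mu$ as an operator subsystem of $\ell^\infty(G)$). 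These exhibit $\h_\mu$ as a $G$-equivariantly ucp-complemented operator subsystem of $\ell^\infty(G)$. By functoriality of the reduced $C^*$-crossed product under equivariant ucp maps, $\mathcal{E}_\mu$ and $\iota$ extend to ucp maps between $G \ltimes_{\Gamma_\mu} \h_\mu$ and $\ell^\infty(G) \rtimes_r G$ whose composition is the identity on the former. Hence $G \ltimes_{\Gamma_\mu} \h_\mu$ is ucp-complemented in the nuclear Roe algebra $\ell^\infty(G) \rtimes_r G$, and since the completely positive approximation property is inherited through ucp-complementation, it is itself nuclear.

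The main obstacle is the functoriality step in $\mathrm{(i)} \Rightarrow \mathrm{(ii)}$: extending $\mathcal{E}_\mu$ and $\iota$ to the reduced crossed products in such a way that the target identifies with the crossed product built from $\h_\mu$ in its Choi--Effros structure, rather than from the raw operator system $\h^\mu$. The extension itself is standard via the embedding of the reduced crossed product inside $A \otimes_{\min} B(\ell^2(G))$ (by the regular representation) and applying $\phi \otimes \id$; the identification of the target with $G \ltimes_{\Gamma_\mu} \h_\mu$ as defined in this paper is precisely the content of Theorem \ref{main}.
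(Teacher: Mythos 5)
Your proposal is correct, but the key implication (i) $\Rightarrow$ (ii) is carried out by a different mechanism than in the paper. The paper derives the proposition from its proof of Theorem \ref{main}: it sets $\mathcal{R}_\mu := \mathcal{E}_{\Theta(\mu)}(UC^*(G))$, notes that $\mathcal{E}_{\Theta(\mu)}$ restricts to a ucp idempotent on $UC^*(G)$ and that $\tilde\Gamma_{\Theta(\mu)}$ identifies $\mathcal{R}_\mu$ (with its Choi--Effros product) with $G\ltimes_{\Gamma_\mu}\h_\mu$, and then transports nuclearity of $UC^*(G)$ through the factorization $\mathcal{R}_\mu \hookrightarrow UC^*(G)\to M_{n(\alpha)}\to UC^*(G)\to\mathcal{R}_\mu$. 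You never leave the coefficient level: you complement $\h_\mu$ equivariantly inside $\ell_\infty(G)$ via $\iota$ and $\mathcal{E}_\mu$ and invoke functoriality of the reduced crossed product under $G$-equivariant ucp maps to complement $G\ltimes_{\Gamma_\mu}\h_\mu$ inside the Roe algebra. Both routes rest on the same two pillars --- Ozawa's theorem that exactness of $G$ yields nuclearity of $UC^*(G)$, and stability of the completely positive approximation property under ucp complementation --- and your (iii) $\Rightarrow$ (i) is the paper's argument verbatim. What your version buys is independence from $\Theta(\mu)$ and from Theorem \ref{main}: your closing worry is in fact unnecessary, since the object in (ii) is the reduced $C^*$-crossed product of the $C^*$-dynamical system $(\h_\mu, G, \Gamma_\mu)$ furnished by Proposition \ref{lemma1}, and its regular representation inside $\B(L_2(\G))\vtp\h_\mu$ is exactly the algebra the paper writes down, with no appeal to Theorem \ref{main} required. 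What the paper's version buys is that the complementation is witnessed by one concrete map, the restriction of $\mathcal{E}_{\Theta(\mu)}$, in keeping with the theme of realizing everything on $\B(L_2(\G))$. The only points you should make explicit are that the translation action on $(\h^\mu,\circ)$ is by $*$-automorphisms of the Choi--Effros product (this is precisely Proposition \ref{lemma1}) and that $\mathcal{E}_\mu$ commutes with translations because $\Phi_\mu$ does, by the covariance relation (\ref{F.Phi}); both are needed for the functoriality step to apply.
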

\begin{proof}
(i) $\Rightarrow$ (ii): \ 
Denote $\mathcal{R}_\mu := \mathcal{E}_{\Theta(\mu)} (UC^*(G))$.
Since $\mathcal{E}_{\Theta(\mu)} (a) = \mathcal{E}_{\mu} (a)$ for all $a\in \ell_\infty(G)$,
and $\mathcal{E}_{\Theta(\mu)} (\hat a) = \hat a$ for all $\hat a\in C^*_r(G)$,
it follows from the proof of Theorem \ref{main} that
\begin{equation}\label{jit}
\mathcal{R}_\mu \,=\, \tilde \Gamma_{{\Theta(\mu)}}^{-1} (G  \ltimes _{\Gamma_{\mu}} \h_{\mu})\,.
\end{equation}
Moreover, we have $\mathcal{R}_\mu \subseteq UC^*(G)$,
and hence $\mathcal{E}_{\Theta(\mu)}$ restricts to a unital completely positive idempotent on $UC^*(G)$.
Since $G$ is exact, the uniform Roe algebra $UC^*(G)$ is nuclear \cite{Oz}.
Now it is easy to see from the completely positive factorization property
\begin{equation*}
\mathcal{R}_\mu \,\hookrightarrow\, UC^*(G) \rightarrow\, M_{n(\alpha)}\, \rightarrow \, UC^*(G)\, \rightarrow \, \mathcal{R}_\mu
\end{equation*}
that $\mathcal{R}_\mu$ (with its Choi--Effros product) is also nuclear.
But by (\ref{jit}), $\tilde \Gamma_{{\Theta(\mu)}}$ induces a completely isometric order isomorphism of the operator systems
$\mathcal{R}_\mu \cong G  \ltimes _{\Gamma_{\mu}} \h_{\mu}$.
Hence the latter is nuclear.\\
(ii) $\Rightarrow$ (iii): \ Obvious.\\
(iii) $\Rightarrow$ (i): \ This follows from the fact that $C^*_r(G)\subseteq G  \ltimes _{\Gamma_{\mu}} \h_{\mu}$.
\end{proof}

Ozawa proved in \cite{Oz} that a discrete group $G$ is exact if and only if its action
on the Stone--$\mathrm{\check{C}}$ech compactification $\beta G$ is amenable;
equivalently, $G$ admits an amenable action on some compact space.
However, if $G$ acts amenably on $\beta G$, of course it does not need to do so on an arbitrary compact space
(indeed, amenability of the trivial action on a one-point set is equivalent to amenability of the group itself).
Since $\beta G$ is often too large to be useful in applications, it is
interesting to find ``smaller" amenable $G$-spaces, when they exist.
This motivated the above result.

Finally, one may compare Proposition \ref{oz} with the following characterization of the amenability, due to Kaimanovich--Vershik \cite{1} (see also \cite{Ros} for non-discrete groups);
note that the implication (ii) $\Rightarrow$ (i) follows immediately from Proposition \ref{oz}.
\begin{theorem}
Let $G$ be a countable discrete group. Then the following are equivalent:
\begin{itemize}
\item[(i)]
$G$ is amenable;
\item[(ii)]
$\h_\mu = \C1$ for some probability measure $\mu$ on $G$.
\end{itemize}
\end{theorem}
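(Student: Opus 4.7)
For $(i)\Rightarrow(ii)$: this is the classical theorem of Kaimanovich and Vershik~\cite{1} (with the non-discrete extension in~\cite{Ros}). The construction of a $\mu$ with trivial Poisson boundary on a discrete amenable group is a substantive result of random walk theory that lies outside the scope of the operator-algebraic machinery of the paper; I would simply cite it rather than attempt to reprove it.

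For $(ii)\Rightarrow(i)$, the plan is to exploit the crossed product realization of Theorem~\ref{main} in the degenerate case. Suppose $\h_\mu=\C 1$ for some probability measure $\mu$ on $G$. Since $\Gamma_\mu$ is a unital $*$-homomorphism and $\h_\mu=\C 1$, we have $\Gamma_\mu(\h_\mu)=\C(1\otimes 1)$, so the crossed product collapses:
\[
\G\ltimes_{\Gamma_\mu}\h_\mu \;=\; \bigl[\,\Gamma_\mu(\h_\mu)\cup (L_\infty(\hat\G)\otimes 1)\,\bigr]'' \;=\; L_\infty(\hat\G)\otimes 1 \;\cong\; VN(G).
\]
By Corollary~\ref{inj} this crossed product is injective, hence $VN(G)$ is injective. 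Finally, invoking the classical theorem of Lance (for a discrete group $G$, injectivity of $VN(G)$ is equivalent to amenability of $G$) one concludes that $G$ is amenable, which is $(i)$.

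The real ``obstacle'' here is conceptual rather than technical: the probabilistically substantive direction is $(i)\Rightarrow(ii)$, which must be imported from \cite{1}. The direction $(ii)\Rightarrow(i)$, by contrast, is reduced in a few lines to a classical injectivity-vs-amenability dichotomy once the crossed product description of $\h_{\Theta(\mu)}$ is in hand: triviality of the Poisson boundary forces the crossed product to collapse to $VN(G)$, at which point Corollary~\ref{inj} plus Lance's theorem finish the argument. This is the same content that the paper's remark extracts through Proposition~\ref{oz}, the common thread being nuclearity/injectivity of the crossed product when the fibre $\h_\mu$ degenerates to the scalars.
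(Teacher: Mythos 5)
Your proposal is correct and matches the paper's treatment: the paper states this result as a cited theorem of Kaimanovich--Vershik (so the probabilistic direction (i)$\Rightarrow$(ii) is indeed just imported from \cite{1}), and merely remarks that (ii)$\Rightarrow$(i) follows from its own machinery --- which is exactly the argument you supply, namely the collapse of the crossed product to $L_\infty(\hat\G)\otimes 1\cong VN(G)$ when $\h_\mu=\C 1$, injectivity of that crossed product from Corollary \ref{inj}, and the classical equivalence of injectivity of $VN(G)$ with amenability of a discrete group $G$. The only cosmetic difference is that the paper's remark points to Proposition \ref{oz} rather than to Corollary \ref{inj}, but the underlying mechanism (degeneration of the fibre forces the boundary crossed product, which is always injective/nuclear, to coincide with the group (von Neumann) algebra) is the same.
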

\noindent
We remark that requiring condition (ii) to hold for all adapted probability measures $\mu$ on $G$ (i.e., such that the subgroup generated by the support of $\mu$ is dense in $G$)
is a much stronger property than amenability, called the Liouville property; there exist even solvable groups without the Liouville property \cite[Proposition 6.1]{1}.

\bibliographystyle{plain}

\begin{thebibliography}{99}










\bibitem {dela} C. Anantharaman-Delaroche,
\textit{Action moyennable d'un groupe localement compact sur une alg\`{e}bre de von Neumann},
Math. Scand.  \textbf{45}  (1979), no. 2, 289--304.





\bibitem {Chu-Lau} C.-H. Chu \and A. T.-M. Lau,
\textit{Harmonic functions on groups and Fourier algebras}, Lecture Notes in Mathematics,
\textbf{1782}, Springer-Verlag, Berlin, 2002.




\bibitem {EKR} E. G. Effros, J. Kraus \and Z.-J. Ruan,
\textit{On two quantized tensor norms}, Operator Algebras,
Mathematical Physics, and Low Dimensional Topology (Istanbul
1991),  Res. Notes Math. 5, A K Peters,
Wellesley, MA 1993, 125--145.



\bibitem {ERbook} E. G. Effros \and Z.-J. Ruan, \textit{Operator spaces},
London Math. Soc. Monographs, New Series {\bf 23}, Oxford
University Press, New York, 2000.

\bibitem {Enock} 
M. Enock, {\it Measured quantum groupoids in action}, M\'em. Soc. Math. Fr. (N.S.), no. 114 (2008).


\bibitem{Furst} H. Furstenberg,
\textit{Boundary theory and stochastic processes on homogeneous spaces},
Harmonic analysis on homogeneous spaces (Proc. Sympos. Pure Math., Vol. XXVI, Williams Coll., Williamstown, Mass., 1972),
 Amer. Math. Soc., Providence, R.I., 1973, 193--229.



\bibitem{Gha} F. Ghahramani,
\textit{Isometric representation of $M(G)$ on $B(H)$}, Glasgow Math. J.
 \textbf{23} (1982), 119--122.


\bibitem{HK} U. Haagerup \and J. Kraus,
\textit{Approximation properties for group $C^*$-algebras and group von Neumann algebras}, 
Trans. Amer. Math. Soc. {\bf 344} (1994), 667--699.





\bibitem {I} 
M. Izumi, \textit{Non-commutative Poisson boundaries and compact quantum group actions}, 
Adv. Math. \textbf{169} (2002), no. 1, 1--57. 

\bibitem {II} 
M. Izumi, \textit{Non-commutative Poisson boundaries}, in: \textit{Discrete geometric analysis},
Contemp. Math., 347, Amer. Math. Soc., Providence, RI, 2004, 69--81. 

\bibitem {III} 
M. Izumi, \textit{$E_0$-semigroups: around and beyond Arveson's work}, 
J. Operator Theory \textbf{68} (2012), no. 2, 335-–363. 

\bibitem {INT}
M. Izumi, S. Neshveyev \and L. Tuset,
 \textit{Poisson boundary of the dual of ${SU}_q(n)$}, Comm. Math. Phys.
\textbf{262} (2006), no. 2,  505--531.


\bibitem {JN} 
W. Jaworski \and M. Neufang, \textit{The Choquet--Deny equation in a Banach space},
Canad. J. Math. \textbf{59} (2007), no. 4, 795--827.




\bibitem {JNR} M. Junge, M. Neufang \and Z.-J. Ruan,
\textit{A representation theorem for locally compact quantum
groups}, Int. J. Math. \textbf{20} (2009), 377--400.



\bibitem {1} V. A. Kaimanovich \and A. M. Vershik,
{\it Random walks on discrete groups: boundary and entropy}, Ann.  Probability,
\textbf{11} (1983), 457--490.



\bibitem{Kus} J. Kustermans, \textit{Locally compact quantum groups in 
the universal setting}, International J. Math. {\textbf 12} (2001), 289--338.


\bibitem {KV1} J. Kustermans \and S. Vaes, \textit{Locally compact quantum groups},
Ann. Sci. Ecole Norm. Sup. \textbf{33} (2000), 837--934.

\bibitem {KV2} J. Kustermans \and S. Vaes, \textit{Locally compact quantum groups
in the von Neumann algebraic setting}, Math. Scand. \textbf{92}
(2003), 68--92.
 



\bibitem {NRS} M. Neufang, Z.-J. Ruan \and N. Spronk,
\textit{Completely isometric representations of $M_{cb}A(G)$ and
$UCB(\hat G)^*$}, Trans. Amer. Math. Soc. \textbf{360} (2008),
1133--1161.

\bibitem {NR} M. Neufang \and V. Runde, \textit{Harmonic operators: the dual perspective},
Math. Z. \textbf{255} (2007), 669--690.


\bibitem {Oz} N. Ozawa,
\textit{Amenable actions and exactness for discrete groups}, C. R. Acad. Sci. Paris Sér. I Math. 
{\bf 330} (2000), no. 8, 691--695.





\bibitem {Ros} J. Rosenblatt,
 \textit{Ergodic and mixing random walks on locally compact groups}, Math. Ann.
\textbf{257} (1981), no. 1,  31--42.


\bibitem{Ruan1992}  Z.-J. Ruan, {\it On the predual of dual algebras}, J. Operator Theory
 {\bf 27} (1992), 179--192.

\bibitem{Sto} E. St{\o}rmer, {\it Regular abelian Banach algebras of linear maps of operator algebras}, J. Funct.
Anal. {\bf 37} (1980), 331-–373. 

\bibitem {Tom} R. Tomatsu,
\textit{A characterization of right coideals of quotient type and its application to classifcation of Poisson
boundaries},
Comm. Math. Phys. {\bf 275} (2007), no. 1, 271--296.





\bibitem {VV2} S. Vaes \and N. Vander Vennet,
\textit{Poisson boundary of the discrete quantum group $\widehat{A_u(F)}$}, Compos. Math.
 \textbf{146} (2010), no. 4,  1073--1095.



\bibitem {VV3} S. Vaes \and R. Vergnioux,
\textit{The boundary of universal discrete quantum groups, exactness, and factoriality}, Duke Math. J.
 \textbf{140} (2007), no. 1,  35--84.





\bibitem {Zim} R. Zimmer, \textit {Amenable ergodic group actions
and an application to Poisson boundaries of Random walks}, 
J. Funct. Anal. {\bf 27} (1978), 350--372.






\end{thebibliography}

\end{document}